\documentclass[letterpaper, 10 pt, conference]{ieeeconf}

\bstctlcite{bstctl:etal, bstctl:nodash, bstctl:simpurl}

\usepackage{anyfontsize}
\usepackage{array} 
\usepackage[normalem]{ulem}
\usepackage{stfloats}   %
\usepackage{mathtools}
\usepackage{xcolor}
\usepackage[utf8]{inputenc}
\usepackage{tabularx}
\usepackage{booktabs} %
\usepackage{mathrsfs}
\usepackage{graphics} %

\usepackage{amsmath} %
\usepackage{amssymb}  %
\usepackage{amsthm}
\usepackage{cite}
\usepackage{bm}
\usepackage{acronym}
\usepackage{paralist}
\usepackage{float}
\usepackage[dvipsnames]{xcolor}
\usepackage{epstopdf}
\usepackage{multicol}
\usepackage{tikz}
\usepackage{hyperref}
\usepackage{graphicx}
\usepackage{soul}
\usepackage{macros} %

\usepackage{stfloats}
\fnbelowfloat

 \newcommand{\newtext}{\color{black}}

\makeatletter
\hypersetup{colorlinks=true}
\AtBeginDocument{\@ifpackageloaded{hyperref}
  {\def\@linkcolor{blue}
  \def\@anchorcolor{red}
  \def\@citecolor{red}
  \def\@filecolor{red}
  \def\@urlcolor{black}
  \def\@menucolor{red}
  \def\@pagecolor{red}
\begingroup
  \@makeother\`%
  \@makeother\=%
  \edef\x{%
    \edef\noexpand\x{%
      \endgroup
      \noexpand\toks@{%
        \catcode 96=\noexpand\the\catcode`\noexpand\`\relax
        \catcode 61=\noexpand\the\catcode`\noexpand\=\relax
      }%
    }%
    \noexpand\x
  }%
\x
\@makeother\`
\@makeother\=
}{}}
\makeatother

\newtheorem{Assumption}{Assumption}
\newtheorem{Lemma}{Lemma}

\newtheorem{Remark}{Remark}

\newcommand{\blue}{\color{blue}}

\newcommand{\bequ}{\begin{eqnarray}}
\newcommand{\eequ}{\end{eqnarray}}

\newcommand{\differential}{\mathrm{d}}
\newcommand{\dist}{\mathrm{dist}}

\IEEEoverridecommandlockouts

\def\BibTeX{{\rm B\kern-.05em{\sc i\kern-.025em b}\kern-.08em
    T\kern-.1667em\lower.7ex\hbox{E}\kern-.125emX}}

\definecolor{AA}{RGB}{34,139,34}

\IEEEoverridecommandlockouts
\begin{document}
\bstctlcite{IEEE_b:BSTcontrol}
\title{\LARGE \bf 

On the Equivalence of Stochastic Control and Path Space Formulations for Schrödinger Bridges over Compact Connected Lie Groups

\thanks{This research has been supported by NJIT's startup funds.}
\thanks{H. Mahmood$^*$ and A. Akhtar$^*$ are with the 
Dept. of Mechanical \& Industrial Engineering at the New Jersey Institute of Technology, Newark, NJ 07102, USA (email: \texttt{\{hm576, adeel.akhtar\}@njit.edu}). G.A. Bondar$^{\dag}$ is with the Department of Applied Mathematics at the University of California at Santa Cruz, CA, 95064, USA. (email: \texttt{gbondar@ucsc.edu}) A. Halder$^{\ddag}$ is with the Dept. of Aerospace Engineering at
Iowa State University, Ames, IA 50011, USA (email: \texttt{ahalder@iastate.edu}).}}
\author{Hamza Mahmood$^*$ \and Georgiy A. Bondar$^{\dag}$  \and Abhishek Halder$^{\ddag}$ \and Adeel Akhtar$^*$}
\maketitle

\begin{abstract}

We establish the equivalence between the stochastic optimal control and path space formulations of the Schrödinger bridge problem (SBP) for the kinematic equation on a compact connected Lie group. Using the geometric concepts of horizontal lift and stochastic anti-development, we derive a Girsanov-type change-of-measure result, and show that the expected control energy equals the relative entropy of the controlled path law with respect to the reference Wiener measure. Thus, the SBP is equivalently a path space relative entropy minimization problem subject to prescribed endpoint marginals.
Our result has three useful implications. From an analytic viewpoint, the shown equivalence helps prove the existence and uniqueness of the SB. From a probabilistic viewpoint, it helps interpret the SB as the most probable deviation of the uncontrolled stochastic dynamics consistent with the endpoint constraints. From a computational viewpoint, it allows using static Sinkhorn recursions to directly solve the relative entropy minimization problem and compute the optimal path measure. We illustrate the equivalence numerically on the torus $\mathbb{T}^2$. The code is publicly available at:\\\url{https://github.com/gradslab/LargeDeviationSBP}

\end{abstract}
\section{Introduction}\label{sec:Introduction} 

The Schrödinger bridge problem (SBP) concerns steering a stochastic system between prescribed endpoint probability distributions while minimizing the expected control effort~\cite{chen2021stochastic,CalHal2022}. A substantial systems-control literature has developed around this problem, including theoretical~\cite{CheGeoPav2016,CalHal2020,CalHal_RefSchBri2020,teter2025hopf,teter2025schrodinger} and computational/application-oriented works~\cite{nodozi2023neural,teter2025probabilistic}, primarily for Euclidean state spaces. In a recent work~\cite{mah2026}, we developed a coordinate-free formulation of the SBP on a compact connected Lie group, established existence-uniqueness of solution through the associated Schrödinger system, and obtained a dynamic Sinkhorn scheme for computing the optimal bridge. However, that work did not address the corresponding path space formulation or its large-deviation interpretation, which identifies the SB as the most probable deviation of the uncontrolled stochastic dynamics consistent with the prescribed endpoint constraints.

A related path space formulation for an SBP on Euclidean space was given in~\cite[Theorem 4.2]{teter2025probabilistic} using Girsanov's theorem. For Lie group valued Brownian motion, a Girsanov type result was developed in~\cite{karandikar1983} for matrix Lie groups by transforming a multiplicative Girsanov problem to the Lie algebra through logarithm and exponential mappings. In contrast, the diffusion considered here evolves through tangent spaces that vary along the trajectory. We use horizontal lift and stochastic anti-development~\cite[Ch. 2]{hsu2002} to represent this diffusion in Euclidean coordinates and carry out the required change of measure intrinsically on the Lie group.

We make the following contributions.
\begin{enumerate}
\item We derive a Girsanov type change-of-measure result for the controlled diffusion on the Lie group using horizontal lift and stochastic anti-development (Lemma~\ref{lem:Girsanov_thm_G}).

\item We establish the equivalence between expected control energy and the relative entropy of the controlled path law with respect to the reference Wiener measure (Corollary~\ref{cor:Girsanov_entropy_energy}), thereby recasting the stochastic control version of the SBP as a path space relative entropy minimization problem.

\item We show three implications of this equivalence: analytically, it yields existence-uniqueness of the SB (Theorem~\ref{thm:existence-uniqueness_SBPsolution}); probabilistically, it provides a large-deviation interpretation of the SB as the most probable deviation of the uncontrolled dynamics consistent with the endpoint constraints; numerically, it enables computation of the optimal path measure via static Sinkhorn recursions.
\end{enumerate}

 We numerically illustrate the equivalence on the torus $\mathbb{T}^2$.

\noindent\textbf{Notations.}
For a Lie group $\mathsf G$ and $R\in\mathsf G$, the notation
$\mathsf T_R\mathsf G$ denotes the tangent space of $\mathsf G$ at $R$.
We use $C([0,1];\mathsf G)$ to denote the space of continuous paths from
$[0,1]$ into $\mathsf G$. The symbols $\Delta_{\mathsf G}$ and
$\operatorname{div}$ denote, respectively, the Laplace--Beltrami
operator on $\mathsf G$ and the divergence operator. In a
stochastic differential equation (SDE), the symbol $\circ$ indicates that the
stochastic integral is understood in the Stratonovich sense. 
For two
measures $\mathbb P$ and $\mathbb Q$ on some measure space $\mathcal{X}$, the notation
$\mathbb P\ll\mathbb Q$ means that $\mathbb P$ is absolutely continuous
with respect to $\mathbb Q$. The \emph{relative
entropy} or \emph{Kullback-Leibler (KL) divergence}
\(
D_{\mathrm{KL}}(\mathbb{P}\parallel\mathbb{Q})
:=
\mathbb{E}_{\mathbb{P}}
\left[
\log
\frac{\mathrm{d}\mathbb{P}}
{\mathrm{d}\mathbb{Q}}
\right] = \int_{\mathcal{X}}
\log\left(
\frac{\mathrm{d}\mathbb{P}}
{\mathrm{d}\mathbb{Q}}
\right)
\mathrm{d}\mathbb{P}, \;\text{if }\mathbb{P}\ll\mathbb{Q},
\) and $+\infty$ otherwise, where $\mathbb{E}_{\mathbb{P}}\left[\cdot\right]$ denotes the expectation operator w.r.t. the measure $\mathbb{P}$, and $\frac{\mathrm{d}\mathbb{P}}
{\mathrm{d}\mathbb{Q}}$ denotes the Radon-Nikodym derivative of $\mathbb{P}$ w.r.t. $\mathbb{Q}$. 
The symbol
$\|\cdot\|_2$ denotes the standard Euclidean norm, whereas 
$\int_0^t \langle\alpha_s,\mathrm d\beta_s\rangle:=\int_0^t \sum_{i=1}^n\alpha_s^i\,\mathrm d\beta_s^i$ denotes the Euclidean
inner product in stochastic integral. For a topological space
$A$, its Borel $\sigma$-algebra is denoted by $\mathcal B(A)$. For two $\sigma$-algebras $\mathcal F_1, \mathcal F_2$, their product $\sigma$-algebra
$\mathcal F_1\otimes\mathcal F_2$ is generated by sets $A_1\times A_2$ with
$A_i\in\mathcal F_i$, $i\in\{1,2\}$. A filtration $\{\mathcal F_t\}_{t\in[0,1]}$ on a probability space
$(\Theta,\mathcal F,\mathbb P)$ is an increasing family of sub-$\sigma$-algebras of $\mathcal F$. Other notations will be introduced in situ.

\section{Equivalence of the Stochastic Control and Path Space Formulations}
\label{sec:Problem_formulation}
This section formulates the SBP as a stochastic optimal control problem, develops the path space change-of-measure construction on the Lie group, and establishes the equivalence between the stochastic control and path space relative entropy formulations. 

\subsection{Background}\label{subsec:Background} {Let $\mathsf{G}$ be a compact connected Lie group with identity element $e$, and let $\mathfrak{g}=\mathsf{T}_{e}\mathsf{G}$ be its finite-dimensional Lie algebra of dimension $n$. Consider an inner product $\langle \cdot, \cdot \rangle_{\mathfrak{g}} : \mathfrak{g} \times \mathfrak{g} \to \mathbb{R}$\, and an orthonormal basis $\{e_1^{\mathfrak{g}}, e_2^{\mathfrak{g}}, \dots , e_n^{\mathfrak{g}}\}$ of $\mathfrak{g}$ w.r.t. $\langle \cdot, \cdot \rangle_{\mathfrak{g}}$\,. Let $\hat{\cdot} : \mathbb{R}^{n} \to \mathfrak{g}$, called the ``hat map", denote an isomorphism between the vector spaces $\mathbb{R}^{n}$ and $\mathfrak{g}$. For any \(R\in\mathsf G\), we define the left multiplication \(L_R:\mathsf G\to\mathsf G\) by \(L_R(g)=Rg\). Its differential at the identity,
\(
(\mathsf{D}L_R)_e: \msf{T}_e\mathsf G\to \msf{T}_R\mathsf G,
\)
transports elements of $\mathfrak{g}$ to tangent vectors at \(R\). The \emph{controlled kinematic equation}~\cite[Ch. 5]{bullo2005geometric} on $\mathsf{G}$ is the following (deterministic) differential equation: 
\begin{equation}
\label{eq:kinematic_eq_on_G}
\dot{R}_t
=
(\mathsf{D}L_{R_t})_e
\left(\widehat{\Omega}(R_t,t)\right),\quad\dot{R}_t:=\frac{\differential}{\differential t}R_t,
\end{equation}
where $R_t \in \mathsf{G}$ denotes the configuration (i.e., state) of the system evolving on $\mathsf{G}$ at time $t$, with its time derivative $\dot{R}_t \in \mathsf{T}_{R_{t}}\mathsf{G}$, and the control input ${\Omega} : \mathsf{G} \times [0,1] \rightarrow \mathbb{R}^{n}$.

The \emph{controlled stochastic kinematic equation} is a noisy generalization of \eqref{eq:kinematic_eq_on_G} that describes a diffusion process $R(t)$ on $\mathsf{G}$. This process solves the controlled Stratonovich SDE: 
\begin{equation}
\label{eq:stochastic_kinematic_eq_on_G}
\mathrm{d}R_t
=
(\mathsf{D}L_R)_e\widehat{\Omega}(R,t)\:\mathrm{d}t +
\sigma\sum_{i=1}^{n}
(\mathsf{D}L_R)_e \:e_i^{\mathfrak g}
\circ \mathrm{d}W^i_t.
\end{equation}
In \eqref{eq:stochastic_kinematic_eq_on_G}, $W_t$ is an $\mathbb R^n$-valued standard Brownian motion, and $\sigma > 0$ is the (isotropic) diffusion strength.

\subsection{The Schrödinger Bridge Problem}
Let $\mu$ be the Haar (volume) measure on $\mathsf{G}$ and let $\mathcal{P}_2(\mathsf{G})$ denote the set of PDFs on $\mathsf{G}$ with finite second moments, i.e.,
\begin{align}
\label{eq:P_2_G}
&\mathcal{P}_2(\mathsf{G}) \eqdef \biggl\{ \rho : \mathsf{G} \rightarrow \mathbb{R}_{\geq 0} \bigg|
\int_{\mathsf{G}} \rho \, \mathrm{d}\mu(R) = 1, \notag \\ \;
&\;\;\;\;\;\;\;\;\;\;\;\;\;\;\;\;\;\int_{\mathsf{G}} {\dist(R,R_0)^2} \, \rho \, \mathrm{d}\mu(R) < \infty \biggr\},
\end{align} 
where $\dist(R,R_0)$ denotes the \emph{Riemannian distance} of $R \in \mathsf{G}$ from a fixed $R_0\in \mathsf{G}$. Given $\rho_0, \rho_1 \in \mathcal{P}_2(\mathsf{G})$, let $\mathcal{P}_{01}(\mathsf{G})$ be the set of all probability density trajectories $\rho(\cdot, t)$ in $\mathcal{P}_2(\mathsf{G})$ that are continuous in $t \in [0,1]$ with endpoints $\rho(\cdot,0) = \rho_0$ and $\rho(\cdot,1) = \rho_1$, i.e.,   
\begin{align}
\label{eq:P_01_G}
&\mathcal{P}_{01}(\mathsf{G}) \eqdef \biggl\{ \rho: \mathsf{G} \times [0,1] \to \mathbb{R}_{\geq 0} \,\bigg|\, \rho(\cdot,0) = \rho_0, \rho(\cdot,1) = \rho_1, \notag \\ \;
&\;\;\;\;\;\;\;\;\;\;\;\;\;\;\;\;\;\;\;\;\;\;\,\text{for each }t \in [0,1], \rho(\cdot, t) \in \mathcal{P}_2(\mathsf{G}) \biggr\}.
\end{align}
Also, let $\mathcal{V}$ be the set of all Markovian control policies on $\mathsf{G}$ with finite energy, i.e., 
{\begin{equation}
\label{eq:Controls}
\begin{aligned}
    &\mathcal{V} \eqdef 
    \biggl\{ {\Omega} : \mathsf{G} \times [0,1] \rightarrow \mathbb{R}^{n} \;\bigg|\;
    \Omega \text{ is measurable and } \\[1pt]
    &\;\;\forall \rho \in \mathcal{P}_{01}(\mathsf{G}),  \int_{0}^{1}\int_{\mathsf{G}} \|\Omega(R,t)\|_{2}^2\,\rho(R,t)\, \differential\mu(R)\:\differential t < \infty
    \biggr\}.
\end{aligned}
\end{equation}
}  
{\newtext 
The main idea in SBP is to design a control $\Omega \in \mathcal{V}$ that steers the stochastic state $R$ on $\mathsf{G}$ from a prescribed PDF $\rho_0$ to another $\rho_1$ over a finite time horizon $[0,1]$ with minimum expected control effort. The controlled stochastic process $R$ is governed by the stochastic differential equation (SDE)~\eqref{eq:stochastic_kinematic_eq_on_G} together with 
\(
R_0 \sim \rho_{0}(R), \; R_1 \sim \rho_{1}(R).
\)
Then, the corresponding controlled state PDF $\rho(R,t)$ must satisfy the constraints~\cite[equation (6b)]{mah2026}: 
\begin{subequations}
\label{eq:FPK_PDE}
\begin{align}
&\partial_t \rho = -\mathrm{div}(\rho (\mathsf D L_R)_e\widehat{\Omega}(R,t)) + \frac{\sigma^2}{2} \Delta_{\mathsf{G}} \rho(R,t),
\label{1st_FPK_PDE} \\
&\rho(R,0) = \rho_0(R), \qquad \rho(R,1) = \rho_1(R). \label{2nd_FPK_PDE}
\end{align}
\end{subequations}
}

We can now formally state the SBP for the kinematic equation on $\mathsf{G}$ as follows.
\begin{problem}
\label{problem:SBP_on_G}
Solve stochastic optimal control problem:
\begin{subequations}
\label{eq:SBP_on_G}
\begin{align}
&\inf_{\left(\rho, \Omega\right) \in \mathcal{P}_{01}(\mathsf{G}) \times  \mathcal{V}} \;  \int_{0}^{1} \int_{\mathsf{G}} \frac{1}{2} \|\Omega(R,t)\|_{2}^{2} \, \rho(R,t) \differential\mu(R) \mathrm{d}t  \label{1st_SBP_on_G} \\
&\quad\mathrm{subject\,to\,\,\,\,\,\,\,\,\,{constraints}}~\eqref{eq:FPK_PDE}.
\end{align}
\end{subequations}
\end{problem}

{
}

Problem~\ref{problem:SBP_on_G} therefore seeks a finite-energy Markovian control that steers the stochastic process on $\mathsf G$ between the prescribed endpoint distributions while minimizing the expected control energy. The associated density evolution $\rho \in \mathcal{P}_{01}(\mathsf{G})$ satisfying~\eqref{eq:FPK_PDE} generates a stochastic interpolant between these marginals. We next develop the path space formulation and establish its equivalence with the control formulation \eqref{eq:SBP_on_G}.

\subsection{Path Space Setup and Stochastic Anti-Development}
We recall that a stochastic process $Y=(Y_t)_{t\in[0,1]}$ with values in
a measurable space $(E,\mathcal E)$ is said to be \emph{progressively
measurable}~\cite[Definition 1.11]{shr1991} with respect to a filtration
$\{\mathcal F_t\}_{t\in[0,1]}$ if, for every $t\in[0,1]$, the mapping
\(
(s,\theta)\longmapsto Y_s(\theta),
\;\;(s,\theta)\in[0,t]\times\Theta,
\)
is measurable from
$\bigl([0,t]\times\Theta,
\mathcal B([0,t])\otimes\mathcal F_t\bigr)$
to $(E,\mathcal E)$. When the filtration is the usual augmentation
of the canonical filtration with respect to a probability measure
$\mathbb P$, we simply say that $Y$ is progressively measurable
with respect to the canonical filtration under $\mathbb P$. In
particular, the Assumption~\ref{assump:progressive_measurability} below requires that the process
$t\mapsto\Omega(R_t,t)$ be progressively measurable in this sense. An SDE is said to be \emph{well posed in law} for a given
initial distribution if a weak solution~\cite[Definition 3.1]{shr1991} exists and its probability
law is unique: that is, if
$(R,W)$ and $(\widetilde R,\widetilde W)$ are any two weak solutions,
possibly defined on different filtered probability spaces but having
the same prescribed initial distribution, then $R$ and
$\widetilde R$ induce the same probability measure on the path space
$C([0,1];\mathsf G)$.  

{\newtext Let $X_i$ be the left-invariant vector field on $\mathsf G$
associated with $e_i^{\mathfrak g}$, given by}
\[
X_i(R):=(\mathsf D L_R)_e e_i^{\mathfrak g},
\qquad i=1,\ldots,n.
\]
For $a=(a_1,\ldots,a_n)^\top\in\mathbb R^n$, set
\(
\mathsf E_R a:=\sum_{i=1}^n a_iX_i(R).
\)
This map $\mathsf E_R:\mathbb R^n\to \mathsf{T}_{R}\mathsf G$ is an isometry. Indeed, the tangent space $\mathsf T_R\mathsf G$ is endowed with the inner product induced by the left-invariant Riemannian metric, namely
\[
\langle v,w\rangle_R
=
\left\langle
(\mathsf D L_{R^{-1}})_R v,\,
(\mathsf D L_{R^{-1}})_R w
\right\rangle_{\mathfrak g}.
\]
Since $\{e_i^{\mathfrak g}\}_{i=1}^n$ is orthonormal in $\mathfrak g$, it follows that $\mathsf E_R:\mathbb R^n\to \mathsf T_R\mathsf G$ preserves inner products and hence is an isometry. 
Let
\(
\mathcal X:=C([0,1];\mathsf G),
\)
equipped with its Borel $\sigma$-algebra and canonical filtration, and
let $\mathbb W_{\rho_0}^{\sigma}$ be the Wiener measure on $\mathcal X$ associated with the reference diffusion
\begin{equation}
\label{eq:Girsanov_reference}
\mathrm dR_t
=
\sigma\sum_{i=1}^n
X_i(R_t)\circ\mathrm dW_t^i,
\qquad
R_0\sim\rho_0,
\end{equation}
where $(W_t)_{t\ge0}$ is an $\mathbb R^n$-valued standard Brownian motion and $\rho_0 \in \mathcal{P}_2(\mathsf{G})$ given. 

Let $\nabla$ denote the Levi-Civita connection associated with the
left-invariant Riemannian metric on $\mathsf G$ induced by
$\langle\cdot,\cdot\rangle_{\mathfrak g}$~\cite[Section 21.3]{gallier2020}. Let
$U_t:\mathbb R^n\to\mathsf T_{R_t}\mathsf G$ denote the horizontal lift~\cite[Chapter 2]{hsu2002}
of the canonical process $R_t$ with respect to $\nabla$, initialized by
$U_0=\mathsf E_{R_0}$, and let
\begin{equation}
\label{eq:Girsanov_antidevelopment}
\beta_t
:=
\frac1{\sigma}
\int_0^t U_s^{-1}\circ\mathrm dR_s
\end{equation}
denote its stochastic anti-development. Under
$\mathbb W_{\rho_0}^{\sigma}$, $\beta$ is a standard
$\mathbb R^n$-valued Brownian motion. {\newtext Next we define the map $\mathsf O_t:\mathbb R^n\to\mathbb R^n$ as the
composition of $U_t^{-1}$ and $\mathsf E_{R_t}$:} namely
\(
\mathsf O_t:=U_t^{-1}\mathsf E_{R_t}.
\)
This map $\mathsf O_t$ is an orthogonal transformation of $\mathbb{R}^n$.

For a Markovian control
$\Omega \in \mathcal{V}$, we let
\begin{equation}
\label{eq:b^Omega}
b^\Omega(R,t)
:=
\mathsf E_R\Omega(R,t)
=
(\mathsf D L_R)_e\widehat{\Omega}(R,t),
\end{equation}
and
\begin{equation}
\label{eq:alpha_t}
\alpha_t
:=
\frac1{\sigma}
\mathsf O_t\Omega(R_t,t).
\end{equation}
We now state the assumptions needed for our setup.  
\begin{Assumption}[Progressive measurability]
\label{assump:progressive_measurability}
The process 
\(
t\longmapsto\Omega(R_t,t)
\)
is progressively measurable with respect to the canonical filtration
under $\mathbb W_{\rho_0}^{\sigma}$.
\end{Assumption}

\begin{Assumption}[Well-posedness in law]
\label{assump:wellposedness}
The controlled Stratonovich SDE
\begin{equation}
\label{eq:Girsanov_controlled}
\mathrm dR_t
=
b^\Omega(R_t,t)\,\mathrm dt
+
\sigma\sum_{i=1}^n
X_i(R_t)\circ\mathrm dW_t^i,
\quad
R_0\sim\rho_0,
\end{equation}
is well posed in law.
\end{Assumption}

\begin{Assumption}[Novikov condition]
\label{assump:Novikov}
The control $\Omega \in \mathcal{V}$ satisfies
\begin{equation}
\label{eq:Girsanov_Novikov}
\mathbb E_{\mathbb W_{\rho_0}^{\sigma}}
\left[
\exp\left(
\frac{1}{2\sigma^2}
\int_0^1
\|\Omega(R_t,t)\|_2^2\,\mathrm dt
\right)
\right]
<\infty.
\end{equation}
\end{Assumption}

\begin{Remark}
The Assumptions 1-3 ensure the Girsanov change-of-measure construction on $\mathsf{G}$\,. In particular, progressive measurability guarantees that the control can be used as an admissible integrand in the stochastic exponential, while well-posedness in law ensures that the process obtained after the change of measure can be identified uniquely with the controlled diffusion. The Novikov condition, in turn, guarantees that the corresponding stochastic exponential is a martingale and hence defines a probability measure on the path space. {\newtext These standard assumptions provide natural sufficient conditions for the
change-of-measure argument, and are mild enough to encompass a broad class of controlled diffusions relevant to our setting.} 
\end{Remark}

\subsection{Girsanov Type Change of Measure}
Under
Assumptions~\ref{assump:progressive_measurability}--
\ref{assump:Novikov}, we can relate the controlled and uncontrolled path measures through an explicit Radon-Nikodym derivative, which will be essential for the relative entropy formulation of the SBP. Compared with the Euclidean case, the main additional difficulty on
$\mathsf G$ is that the driving noise takes values in \emph{varying tangent
spaces} rather than in a fixed vector space. To apply the classical Girsanov theorem~\cite[Theorem 8.6.6]{bernt2013}, we will therefore use stochastic anti-development to
represent the diffusion in Euclidean coordinates, and relate the
horizontal frame to the left-invariant frame through the orthogonal map
$\mathsf O_t$. This idea is used in the following lemma.

\begin{Lemma}[Girsanov type result for a controlled diffusion on $\mathsf G$]
\label{lem:Girsanov_thm_G}
Under
Assumptions~\ref{assump:progressive_measurability}--
\ref{assump:Novikov}, 
the stochastic exponential
\begin{equation}
\label{eq:Girsanov_Z}
Z_t
:=
\exp\left(
\int_0^t
\langle\alpha_s,\mathrm d\beta_s\rangle
-
\frac12
\int_0^t
\|\alpha_s\|_2^2\,\mathrm ds
\right)
\end{equation}
is a strictly positive
$\mathbb W_{\rho_0}^{\sigma}$-martingale with
$\mathbb E_{\mathbb W_{\rho_0}^{\sigma}}[Z_t]=1$. Define the probability measure $\mathbb P^\Omega$ on $\mathcal X$ by
\[
\frac{\mathrm d\mathbb P^\Omega}
     {\mathrm d\mathbb W_{\rho_0}^{\sigma}}
=
Z_1.
\]
Then
\begin{equation}
\label{eq:Girsanov_shifted_BM}
\beta_t^\Omega
:=
\beta_t-\int_0^t\alpha_s\,\mathrm ds
\end{equation}
is a standard $\mathbb R^n$-valued Brownian motion under
$\mathbb P^\Omega$. Furthermore, under $\mathbb P^\Omega$, the canonical
process $R$ has the law of the controlled diffusion
\eqref{eq:Girsanov_controlled}, and
\begin{align}
\frac{\mathrm d\mathbb P^\Omega}
     {\mathrm d\mathbb W_{\rho_0}^{\sigma}}
&=
\exp\Biggl(
\frac1{\sigma}
\int_0^1
\left\langle
\mathsf O_t\Omega(R_t,t),
\mathrm d\beta_t
\right\rangle
\notag\\
&\qquad\qquad
-
\frac1{2\sigma^2}
\int_0^1
\|\Omega(R_t,t)\|_2^2\,\mathrm dt
\Biggr).
\label{eq:Girsanov_RN}
\end{align}
\end{Lemma}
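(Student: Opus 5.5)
The plan is to reduce everything to the classical Euclidean Girsanov theorem \cite[Theorem 8.6.6]{bernt2013} applied to the anti-development $\beta$. We then transport the conclusion back to $\mathsf G$ through the horizontal lift $U_t$ and the orthogonal map $\mathsf O_t$.

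\emph{Martingale property and the Radon--Nikodym formula.} Under $\mathbb W_{\rho_0}^{\sigma}$, $\beta$ is a standard $\mathbb R^n$-valued Brownian motion with respect to the canonical filtration. This is the standard fact about the anti-development of a Brownian motion on a Riemannian manifold, since the generator of \eqref{eq:Girsanov_reference} is $\frac{\sigma^2}{2}\Delta_{\mathsf G}$, because $\{X_i\}$ is an orthonormal frame and left-invariant fields on a unimodular group are divergence free. The horizontal lift $U_t$ is a continuous adapted process obtained by solving an SDE driven by $R$, so $\mathsf O_t=U_t^{-1}\mathsf E_{R_t}$ is adapted and continuous. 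By Assumption~\ref{assump:progressive_measurability}, $\alpha_t=\frac1\sigma\mathsf O_t\Omega(R_t,t)$ is then progressively measurable. Since $\mathsf O_t$ is orthogonal, $\|\alpha_t\|_2^2=\sigma^{-2}\|\Omega(R_t,t)\|_2^2$. Assumption~\ref{assump:Novikov} is therefore exactly Novikov's condition $\mathbb E[\exp(\frac12\int_0^1\|\alpha_s\|_2^2\,\mathrm ds)]<\infty$ for $\alpha$ against $\beta$. Hence $Z$ is a true martingale with $\mathbb E[Z_t]=1$. It is strictly positive as an exponential of a finite quantity. Substituting $\alpha$ and the norm identity into \eqref{eq:Girsanov_Z} at $t=1$ gives \eqref{eq:Girsanov_RN} immediately. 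The Euclidean Girsanov theorem then yields that $\beta^\Omega$ in \eqref{eq:Girsanov_shifted_BM} is a $\mathbb P^\Omega$-Brownian motion.

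\emph{Identification of the law.} This is the geometric step and the main obstacle. The defining property of the anti-development, with $U_t$ horizontal, is that $R$ solves $\mathrm dR_t=\sigma U_t\circ\mathrm d\beta_t$; more precisely, $(R,U)$ solves the horizontal Stratonovich SDE on the orthonormal frame bundle driven by $\sigma\beta$. This is a pathwise relation, so it remains valid under the equivalent measure $\mathbb P^\Omega$. Writing $\mathrm d\beta_t=\mathrm d\beta^\Omega_t+\alpha_t\,\mathrm dt$ and noting that the $\mathrm dt$ term has no Stratonovich correction, we get
\[
\mathrm dR_t=\sigma U_t\alpha_t\,\mathrm dt+\sigma U_t\circ\mathrm d\beta^\Omega_t
=\mathsf E_{R_t}\Omega(R_t,t)\,\mathrm dt+\sigma U_t\circ\mathrm d\beta^\Omega_t .
\]
The drift is $b^\Omega(R_t,t)$, using $U_t\mathsf O_t=\mathsf E_{R_t}$. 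For the noise, I would define $\widetilde W_t:=\int_0^t\mathsf O_s^{-1}\,\mathrm d\beta^\Omega_s$ in the It\^o sense. By L\'evy's characterization, $\widetilde W$ is a $\mathbb P^\Omega$-Brownian motion, since $\mathsf O_s$ is orthogonal. It remains to check that $\sigma U_t\circ\mathrm d\beta^\Omega_t$ coincides with $\sigma\sum_iX_i(R_t)\circ\mathrm d\widetilde W^i_t$.

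I would verify this at the level of generators instead of manipulating Stratonovich products directly. For $f\in C^\infty(\mathsf G)$, It\^o's formula for the horizontal SDE gives that
\[
f(R_t)-f(R_0)-\int_0^t\Bigl(\langle \nabla f,b^\Omega\rangle+\tfrac{\sigma^2}{2}\Delta_{\mathsf G}f\Bigr)(R_s,s)\,\mathrm ds
\]
is a $\mathbb P^\Omega$-local martingale. The reason is that the horizontal Laplacian $\sum_i H_i^2$ projects to $\Delta_{\mathsf G}$, and that $\nabla f$ applied to $U_t\alpha_t$ gives the drift. The controlled SDE \eqref{eq:Girsanov_controlled} has the same generator: $\sum_iX_i^2=\Delta_{\mathsf G}$, again using unimodularity and the fact that the divergence of left-invariant fields vanishes on compact $\mathsf G$. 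So $\mathbb P^\Omega$ solves the martingale problem associated with \eqref{eq:Girsanov_controlled}, with initial law $\rho_0$, which is unchanged because $Z_0=1$. Solutions of the martingale problem correspond to weak solutions, so Assumption~\ref{assump:wellposedness} then identifies the law of $R$ under $\mathbb P^\Omega$ with the law of the controlled diffusion.

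The delicate points I expect are the following. First, the measurability and adaptedness of $U$ and $\mathsf O$ with respect to the augmented canonical filtration. Second, the identity $\sum_iX_i^2=\Delta_{\mathsf G}$, which needs a bi-invariant metric or at least unimodularity together with divergence-free left-invariant fields. For a general left-invariant metric on a compact group one would have to check that $\nabla_{X_i}X_i$ sums to zero, and I would argue this via $\sum_i\mathrm{ad}^*_{e_i}e_i=0$, which holds by unimodularity.
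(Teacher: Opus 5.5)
Your proposal is correct and follows the paper's proof essentially step for step: Novikov's condition (via $\|\alpha_t\|_2^2=\sigma^{-2}\|\Omega(R_t,t)\|_2^2$ from orthogonality of $\mathsf O_t$) and the Euclidean Girsanov theorem for $\beta$; substitution of $\mathrm d\beta_t=\mathrm d\beta^\Omega_t+\alpha_t\,\mathrm dt$ into $\mathrm dR_t=\sigma U_t\circ\mathrm d\beta_t$ with $\sigma U_t\alpha_t=b^\Omega(R_t,t)$; and identification of the law by matching generators through $\sum_i X_i^2=\Delta_{\mathsf G}$ together with Assumption~\ref{assump:wellposedness}. Your martingale-problem phrasing of that last step and your unimodularity justification of $\sum_iX_i^2=\Delta_{\mathsf G}$ make explicit two points that the paper asserts without comment.
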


\begin{proof}
By the Novikov condition (Assumption~\ref{assump:Novikov}), %
\[
Z_t
=
\exp\left(
\int_0^t
\langle\alpha_s,\mathrm d\beta_s\rangle
-
\frac12
\int_0^t
\|\alpha_s\|_2^2\,\mathrm ds
\right)
\]
is a strictly positive martingale satisfying
\(
\mathbb E_{\mathbb W_{\rho_0}^{\sigma}}[Z_t]=1.
\)
We can therefore define the probability measure
$\mathbb P^\Omega$ on $\mathcal X$ by
\(
\mathrm d\mathbb P^\Omega
=
Z_1\,\mathrm d\mathbb W_{\rho_0}^{\sigma}.
\)
Since $Z_1>0$,
$\mathbb W_{\rho_0}^{\sigma}$-almost surely, the two measures are
mutually absolutely continuous: 
\(
\mathbb P^\Omega
\ll
\mathbb W_{\rho_0}^{\sigma} \text{ and } 
\mathbb W_{\rho_0}^{\sigma}
\ll
\mathbb P^\Omega.
\)
By the classical Girsanov theorem~\cite[Theorem 8.6.6]{bernt2013}, 
\begin{equation}
\label{eq:beta_controlled}
\beta_t^\Omega
:=
\beta_t
-
\int_0^t\alpha_s\,\mathrm ds
\end{equation}
is a standard $\mathbb R^n$-valued Brownian motion under
$\mathbb P^\Omega$.
Equivalently, we get
\begin{equation}
\label{eq:d_beta_t}
\mathrm d\beta_t
=
\mathrm d\beta_t^\Omega
+
\alpha_t\,\mathrm dt.
\end{equation}
From the stochastic anti-development in~\eqref{eq:Girsanov_antidevelopment}, 
we have  
\begin{equation}
\label{eq:development_relation_proof}
\mathrm dR_t
=
\sigma U_t\circ\mathrm d\beta_t.
\end{equation}
Substituting the expression for $\mathrm d\beta_t$ from \eqref{eq:d_beta_t} into \eqref{eq:development_relation_proof} yields
\begin{align}
\mathrm dR_t
&=
\sigma U_t
\circ
\left(
\mathrm d\beta_t^\Omega
+
\alpha_t\,\mathrm dt
\right)
=
\sigma U_t\circ\mathrm d\beta_t^\Omega
+
\sigma U_t\alpha_t\,\mathrm dt.
\label{eq:after_girsanov_horizontal}
\end{align}
Since the second term is of finite variation, no Stratonovich
correction arises from the change
$\mathrm d\beta_t\mapsto
\mathrm d\beta_t^\Omega+\alpha_t\,\mathrm dt$.
Using the definition of $\alpha_t$ {\newtext in~\eqref{eq:alpha_t} and of $b^\Omega$ in~\eqref{eq:b^Omega}, we have 
$
\sigma U_t\alpha_t
=
b^\Omega(R_t,t).
$ Plugging this in~\eqref{eq:after_girsanov_horizontal} gives   
}

\begin{equation}
\label{eq:controlled_horizontal}
\mathrm dR_t
=
b^\Omega(R_t,t)\,\mathrm dt
+
\sigma U_t\circ\mathrm d\beta_t^\Omega.
\end{equation}
The infinitesimal generator of
\eqref{eq:controlled_horizontal} is
\(
\mathcal L_t^\Omega
=
b^\Omega(\cdot,t)
+
\frac{\sigma^2}{2}\Delta_{\mathsf G},
\)
where $b^\Omega$ is understood as the corresponding first-order
differential operator. On the other hand, the Stratonovich SDE
\(
\mathrm dR_t
=
b^\Omega(R_t,t)\,\mathrm dt
+
\sigma
\sum_{i=1}^n
X_i(R_t)\circ\mathrm dW_t^{i}
\)
has generator
\begin{align}
b^\Omega(\cdot,t)
+
\frac{\sigma^2}{2}
\sum_{i=1}^n X_i^2
&=
b^\Omega(\cdot,t)
+
\frac{\sigma^2}{2}\Delta_{\mathsf G}.
\end{align}
Thus, the horizontal representation \eqref{eq:controlled_horizontal}
and the left-invariant representation \eqref{eq:Girsanov_controlled}
have the same infinitesimal generator. By the assumed well-posedness
in law of the controlled SDE \eqref{eq:Girsanov_controlled} (Assumption~\ref{assump:wellposedness}), their
path laws coincide. Consequently, $\mathbb P^\Omega$ is precisely the
path law of the controlled diffusion \eqref{eq:Girsanov_controlled}.

Since $Z_t$ is a martingale with respect to the filtration that
contains $R_0$, we also have
\(
\mathbb E_{\mathbb W_{\rho_0}^{\sigma}}
[Z_1\mid R_0]
=
1.
\)
Hence the change of measure leaves the initial distribution
unchanged:
\(
R_0\sim\rho_0\,\mathrm d\mu
\;\;
\text{under }\mathbb P^\Omega.
\)
Since
$
b^\Omega(R_t,t)
=
\mathsf E_{R_t}\Omega(R_t,t),
$
we obtain
\begin{align*}
U_t^{-1}b^\Omega(R_t,t)
&=
U_t^{-1}
\mathsf E_{R_t}\Omega(R_t,t)
\notag =
\mathsf O_t\Omega(R_t,t).
\label{eq:control_horizontal_coordinates}
\end{align*}
Since $\mathsf O_t$ is orthogonal,
\begin{equation}
\label{eq:norm_alpha}
\|\alpha_t\|_2^2
=
\frac{1}{\sigma^2}
\|\Omega(R_t,t)\|_2^2.
\end{equation}
Substituting
\(
\alpha_t
=
\frac{1}{\sigma}
\mathsf O_t\Omega(R_t,t)
\)
into the definition of $Z_1$ and using
\eqref{eq:norm_alpha} give~\eqref{eq:Girsanov_RN}, which completes the proof.
\end{proof}

\subsection{Relative Entropy and Control Energy}
Lemma~\ref{lem:Girsanov_thm_G} helps to establish a link between relative entropy and control energy, formalized in Corollary \ref{cor:Girsanov_entropy_energy}. This will be useful in the development that follows.
\begin{corollary}[Relative entropy and control energy]
\label{cor:Girsanov_entropy_energy}
Under the hypotheses of
Lemma~\ref{lem:Girsanov_thm_G}, suppose additionally that
\begin{equation}
\label{eq:Girsanov_finite_controlled_energy}
\mathbb E_{\mathbb P^\Omega}
\left[
\int_0^1
\|\Omega(R_t,t)\|_2^2\,\mathrm dt
\right]
<\infty.
\end{equation}
Then {\newtext the KL divergence $D_{\mathrm{KL}}
\left(
\mathbb P^\Omega
\parallel
\mathbb W_{\rho_0}^{\sigma}
\right)$ is given by }
\begin{equation}
\label{eq:Girsanov_entropy_energy}
D_{\mathrm{KL}}
\left(
\mathbb P^\Omega
\parallel
\mathbb W_{\rho_0}^{\sigma}
\right)
=
\frac{1}{2\sigma^2}
\mathbb E_{\mathbb P^\Omega}
\left[
\int_0^1
\|\Omega(R_t,t)\|_2^2\,\mathrm dt
\right].
\end{equation}
\end{corollary}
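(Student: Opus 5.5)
We compute $D_{\mathrm{KL}}(\mathbb P^\Omega\parallel\mathbb W_{\rho_0}^{\sigma})=\mathbb E_{\mathbb P^\Omega}[\log Z_1]$ directly from the explicit density \eqref{eq:Girsanov_RN} of Lemma~\ref{lem:Girsanov_thm_G}. Absolute continuity $\mathbb P^\Omega\ll\mathbb W_{\rho_0}^{\sigma}$ is already established there, so the divergence is given by the integral formula rather than $+\infty$. The key step is to rewrite the stochastic integral against $\beta$, which is a $\mathbb W_{\rho_0}^{\sigma}$-Brownian motion, in terms of $\beta^\Omega$, which is a $\mathbb P^\Omega$-Brownian motion. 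Substituting $\mathrm d\beta_t=\mathrm d\beta_t^\Omega+\alpha_t\,\mathrm dt$ from \eqref{eq:d_beta_t} into $\log Z_1$ gives, $\mathbb P^\Omega$-a.s.,
\begin{equation*}
\log Z_1=\int_0^1\langle\alpha_t,\mathrm d\beta_t^\Omega\rangle+\frac12\int_0^1\|\alpha_t\|_2^2\,\mathrm dt .
\end{equation*}
This substitution is legitimate because the two measures are equivalent, so the stochastic integral defined under $\mathbb W_{\rho_0}^{\sigma}$ coincides $\mathbb P^\Omega$-a.s. with the one computed under $\mathbb P^\Omega$ (Girsanov preserves semimartingale decompositions and stochastic integrals).

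Next, by \eqref{eq:norm_alpha} we have $\|\alpha_t\|_2^2=\sigma^{-2}\|\Omega(R_t,t)\|_2^2$. Hence the second term has $\mathbb P^\Omega$-expectation equal to the right-hand side of \eqref{eq:Girsanov_entropy_energy}. It remains to show that the first term has zero $\mathbb P^\Omega$-expectation.

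The main obstacle is this martingale claim. The process $M_t:=\int_0^t\langle\alpha_s,\mathrm d\beta_s^\Omega\rangle$ is a continuous local martingale under $\mathbb P^\Omega$, with integrand $\alpha$ progressively measurable by Assumption~\ref{assump:progressive_measurability}. Its quadratic variation satisfies
\begin{equation*}
\mathbb E_{\mathbb P^\Omega}[\langle M\rangle_1]=\sigma^{-2}\,\mathbb E_{\mathbb P^\Omega}\left[\int_0^1\|\Omega(R_t,t)\|_2^2\,\mathrm dt\right]<\infty
\end{equation*}
by the extra hypothesis \eqref{eq:Girsanov_finite_controlled_energy}. By the Itô isometry, $M$ is therefore an $L^2(\mathbb P^\Omega)$-bounded true martingale, and so $\mathbb E_{\mathbb P^\Omega}[M_1]=0$.

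One integrability detail remains: $\log Z_1$ must be $\mathbb P^\Omega$-integrable for the KL integral to be well defined. This follows because $\log Z_1$ is the sum of an $L^2$ term and an $L^1$ term, both under $\mathbb P^\Omega$. If one prefers to avoid relying directly on the isometry, one can localize with stopping times $\tau_k$, compute $\mathbb E_{\mathbb P^\Omega}[\log Z_{\tau_k}]$, and pass to the limit $k\to\infty$ using monotone convergence for the quadratic term and $L^2$ convergence for $M_{\tau_k}$. Combining the two terms then yields \eqref{eq:Girsanov_entropy_energy}.
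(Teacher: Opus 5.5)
Your proposal is correct and follows the paper's proof essentially step for step. Like the paper, you substitute $\mathrm d\beta_t=\mathrm d\beta_t^\Omega+\alpha_t\,\mathrm dt$ to obtain $\log Z_1=\int_0^1\langle\alpha_t,\mathrm d\beta_t^\Omega\rangle+\frac12\int_0^1\|\alpha_t\|_2^2\,\mathrm dt$. You then use the finite-energy hypothesis with \eqref{eq:norm_alpha} to make the stochastic integral a square-integrable, zero-mean $\mathbb P^\Omega$-martingale, and take expectations. Your extra remarks, on the invariance of the stochastic integral under the equivalent change of measure and on the $\mathbb P^\Omega$-integrability of $\log Z_1$, make explicit points that the paper leaves implicit.
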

\begin{proof}
\medskip
\noindent
From \eqref{eq:beta_controlled}, we have
\(
\mathrm d\beta_t
=
\mathrm d\beta_t^\Omega
+
\alpha_t\,\mathrm dt.
\)
Consequently, \eqref{eq:Girsanov_RN} gives 
\begin{align}
\log
\frac{\mathrm d\mathbb P^\Omega}
{\mathrm d\mathbb W_{\rho_0}^{\sigma}}
&=
\int_0^1
\langle\alpha_t,\mathrm d\beta_t\rangle
-
\frac12
\int_0^1
\|\alpha_t\|_2^2\,\mathrm dt
\notag\\
&=
\int_0^1
\langle
\alpha_t,\mathrm d\beta_t^\Omega
\rangle
+
\frac12
\int_0^1
\|\alpha_t\|_2^2\,\mathrm dt.
\label{eq:log_RN_under_P}
\end{align}
Under the additional finite energy assumption
\eqref{eq:Girsanov_finite_controlled_energy}, relation \eqref{eq:norm_alpha} implies
\[
\mathbb E_{\mathbb P^\Omega}
\left[
\int_0^1
\|\alpha_t\|_2^2\,\mathrm dt
\right]
<\infty.
\]
Therefore,
\(
\int_0^t
\langle
\alpha_s,\mathrm d\beta_s^\Omega
\rangle
\)
is a square-integrable martingale under $\mathbb P^\Omega$, and hence
\[
\mathbb E_{\mathbb P^\Omega}
\left[
\int_0^1
\langle
\alpha_t,\mathrm d\beta_t^\Omega
\rangle
\right]
=
0.
\]
Taking $\mathbb P^\Omega$-expectations in
\eqref{eq:log_RN_under_P} gives
\begin{align}
D_{\mathrm{KL}}
\left(
\mathbb P^\Omega
\parallel
\mathbb W_{\rho_0}^{\sigma}
\right)
&=
\mathbb E_{\mathbb P^\Omega}
\left[
\log
\frac{\mathrm d\mathbb P^\Omega}
{\mathrm d\mathbb W_{\rho_0}^{\sigma}}
\right]
\notag\\
&=
\frac12
\mathbb E_{\mathbb P^\Omega}
\left[
\int_0^1
\|\alpha_t\|_2^2\,\mathrm dt
\right]
\notag\\
&=
\frac{1}{2\sigma^2}
\mathbb E_{\mathbb P^\Omega}
\left[
\int_0^1
\|\Omega(R_t,t)\|_2^2\,\mathrm dt
\right],
\end{align}
which proves \eqref{eq:Girsanov_entropy_energy}.    
\end{proof}

\subsection{Equivalence of the Two Formulations}
For the equivalence below, we consider admissible controls satisfying Assumptions~\ref{assump:progressive_measurability}--\ref{assump:Novikov}.
With
$
\mathcal{X} = C([0,1];\mathsf G),$ equipped with its Borel $\sigma$-algebra, let $\mathcal M(\mathcal{X})$ denote the collection of all probability measures on $\mathcal X$. Let
$\mathbb W_{\rho_0}^{\sigma}\in\mathcal M(\mathcal{X})$ denote the Wiener measure on
$\mathcal X$ induced by the uncontrolled diffusion
\begin{equation}
\label{eq:UncontrolledBrownian_thm}
\mathrm dR_t
=
\sigma\sum_{i=1}^{n}
(\mathsf D L_{R_t})_e e_i^{\mathfrak g}
\circ\mathrm dW_t^i,
\qquad
R_0\sim\rho_0\,\mathrm d\mu.
\end{equation}

For any admissible pair $\left(\rho, \Omega\right) \in \mathcal{P}_{01}(\mathsf{G}) \times  \mathcal{V}$ in
Problem~\ref{problem:SBP_on_G}, let  $\mathbb P^\Omega\in\mathcal M(\mathcal{X})$ denote the
path law on $\mathcal X$ of the corresponding controlled diffusion~\eqref{eq:stochastic_kinematic_eq_on_G}.
Since the time-$t$ marginal of $\mathbb P^\Omega$ has density
$\rho(\cdot,t)$ with respect to $\mu$, we have
\begin{align}
&\mathbb E_{\mathbb P^\Omega}
\left[
\int_0^1
\|\Omega(R_t,t)\|_2^2\,\mathrm dt
\right] \notag \\
&\qquad\quad=
\int_0^1\int_{\mathsf G}
\|\Omega(R,t)\|_2^2
\rho(R,t)\,
\mathrm d\mu(R)\,\mathrm dt.
\label{eq:energy_marginal_identity}
\end{align}
Since every admissible control $\Omega\in\mathcal V$ has finite
control energy and is assumed to satisfy Assumptions~\ref{assump:progressive_measurability}--
\ref{assump:Novikov}, Corollary~\ref{cor:Girsanov_entropy_energy} applies and yields
\begin{equation}
\label{eq:entropy_energy_thm}
D_{\mathrm{KL}}
\left(
\mathbb P^\Omega
\parallel
\mathbb W_{\rho_0}^{\sigma}
\right)
=
\frac{1}{2\sigma^2}
\int_0^1\int_{\mathsf G}
\|\Omega(R,t)\|_2^2
\rho(R,t)\,
\mathrm d\mu(R)\,\mathrm dt.
\end{equation} 
Next, we define
\begin{align}
\Pi_{01}
&:=
\Bigl\{
\mathbb M\in\mathcal M(\mathcal X)
\,\Big|\,
\mathbb M
\text{ has marginals }
\rho_0\,\mathrm d\mu
\notag\\
&\qquad\qquad
\text{ and }
\rho_1\,\mathrm d\mu
\text{ at }
t=0,1
\Bigr\}.
\label{eq:Pi01LieGroup}
\end{align}
Since every admissible measure in $\Pi_{01}$ possesses the prescribed endpoint marginals,
minimizing
\eqref{1st_SBP_on_G}
is equivalent to minimizing
the relative entropy
\begin{align}
\inf_{\mathbb P\in\Pi_{01}}
D_{\mathrm{KL}}
\left(
\mathbb P
\parallel
\mathbb W_{\rho_0}^{\sigma}
\right).
\label{eq:SBPKL}
\end{align}

\section{Existence-Uniqueness of the SBP Solution}
\label{sec:existence_uniqueness_min_pair}
As an analytic implication of the equivalence established above, we now state and prove the existence-uniqueness result for the SBP. 
\begin{theorem}[Existence-uniqueness of the SBP solution on $\mathsf G$]
\label{thm:existence-uniqueness_SBPsolution}
Let $\mathsf G$ be a compact connected Lie group with Lie algebra
$\mathfrak g$ of dimension $n$, endowed with the left-invariant
Riemannian metric induced by
$\langle\cdot,\cdot\rangle_{\mathfrak g}$, and let $\sigma>0$.
Suppose that the prescribed endpoint densities $\rho_0, \rho_1$ satisfy
\[
\rho_0,\rho_1\in \mathcal{P}_2(\mathsf{G})\, \cap\, C^2(\mathsf G),
\,
\rho_0(R)>0,\, \rho_1(R)>0,
\,
\forall\,R\in\mathsf G.
\]
Assume further that every admissible control
$\Omega\in\mathcal V$ satisfies
Assumptions~\ref{assump:progressive_measurability}--
\ref{assump:Novikov}.
Then Problem~\ref{problem:SBP_on_G} admits a unique minimizing pair
\(
\left(
\rho_\sigma^{\mathrm{opt}},
\Omega_\sigma^{\mathrm{opt}}
\right).
\)
\end{theorem}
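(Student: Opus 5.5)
We will prove the theorem through the path space reformulation~\eqref{eq:SBPKL}, which Corollary~\ref{cor:Girsanov_entropy_energy} and identity~\eqref{eq:entropy_energy_thm} make available: for every admissible pair, the cost~\eqref{1st_SBP_on_G} equals $\sigma^2 D_{\mathrm{KL}}(\mathbb P^\Omega\parallel\mathbb W_{\rho_0}^{\sigma})$. The plan has three parts.

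\textbf{Existence and uniqueness of the entropy minimizer.} First we reduce the dynamic problem~\eqref{eq:SBPKL} to a static one. We use the disintegration
\[
D_{\mathrm{KL}}(\mathbb P\parallel\mathbb W_{\rho_0}^{\sigma})
= D_{\mathrm{KL}}(\mathbb P_{01}\parallel \mathbb W_{01})
+ \int D_{\mathrm{KL}}(\mathbb P^{xy}\parallel \mathbb W^{xy})\,\mathrm d\mathbb P_{01}(x,y).
\]
Here $\mathbb W^{xy}$ is the Brownian bridge on $\mathsf G$. The second term is minimized, with value zero, by taking $\mathbb P^{xy}=\mathbb W^{xy}$. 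The reference coupling is $\mathrm d\mathbb W_{01}=\rho_0(x)p_\sigma(1,x,y)\,\mathrm d\mu(x)\mathrm d\mu(y)$, where $p_\sigma$ is the heat kernel of $\frac{\sigma^2}{2}\Delta_{\mathsf G}$.

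On a compact manifold, $p_\sigma(1,\cdot,\cdot)$ is smooth and bounded above and below by positive constants. Together with the positivity and continuity of $\rho_0$ and $\rho_1$, this has two consequences. The product coupling $\rho_0\otimes\rho_1$ has finite entropy relative to $\mathbb W_{01}$, so the infimum is finite. Moreover, the set of couplings with marginals $\rho_0\,\mathrm d\mu$ and $\rho_1\,\mathrm d\mu$ is weakly compact (by Prokhorov, since $\mathsf G\times\mathsf G$ is compact) and convex.

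Relative entropy is weakly lower semicontinuous and strictly convex on its finiteness domain. Hence a unique minimizer $\mathbb P_{01}^{\star}$ exists, and $\mathbb P^{\star}=\int \mathbb W^{xy}\,\mathrm d\mathbb P_{01}^{\star}$ is the unique minimizer of~\eqref{eq:SBPKL}. By the classical Schrödinger system argument (Csiszár/Föllmer, or the result already established in~\cite{mah2026}), $\mathbb P^{\star}$ has the product form $\mathrm d\mathbb P^{\star}/\mathrm d\mathbb W_{\rho_0}^{\sigma}=f(R_0)g(R_1)$ with positive functions $f,g$.

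\textbf{Back to the control problem.} Set $\varphi(R,t)=\int p_\sigma(1-t,R,y)g(y)\,\mathrm d\mu(y)$, which solves the backward heat equation. Then $\mathbb P^\star$ is the law of~\eqref{eq:Girsanov_controlled} with
\[
\Omega^{\mathrm{opt}}_\sigma(R,t)=\sigma^2\,\mathsf E_R^{-1}\operatorname{grad}\log\varphi(R,t).
\]
This follows by applying Itô's formula to $\log\varphi(R_t,t)$ along the anti-development $\beta$ and comparing with~\eqref{eq:Girsanov_RN}. The density $\rho^{\mathrm{opt}}_\sigma=\varphi\hat\varphi$ is the time-marginal of $\mathbb P^\star$, so the pair $(\rho^{\mathrm{opt}}_\sigma,\Omega^{\mathrm{opt}}_\sigma)$ satisfies~\eqref{eq:FPK_PDE}. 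Since every admissible pair yields a $\mathbb P^\Omega\in\Pi_{01}$ with cost $\sigma^2 D_{\mathrm{KL}}$, this pair is optimal.

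For uniqueness, suppose two admissible optimal pairs exist. Both of their path laws minimize~\eqref{eq:SBPKL}, so both equal $\mathbb P^\star$. By Assumption~\ref{assump:wellposedness} and Girsanov, the drift is then determined $\mathrm d t\otimes\mathbb P^\star$-a.e., and the marginals are determined. Hence $\Omega$ is unique $\rho\,\mathrm d\mu\,\mathrm dt$-a.e.

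\textbf{Main obstacle.} The hardest step is verifying that $\Omega^{\mathrm{opt}}_\sigma$ is itself admissible: it must lie in $\mathcal V$, satisfy Novikov~\eqref{eq:Girsanov_Novikov}, and satisfy Assumption~\ref{assump:wellposedness}. Without this, the infimum over controls could fail to be attained even though the path space minimizer exists. To handle it, we would first show that $g$ is bounded and bounded away from zero, using the uniform heat kernel bounds and the Schrödinger system. Parabolic regularity on compact $\mathsf G$ then makes $\varphi$ smooth and strictly positive on $\mathsf G\times[0,1)$. The delicate point is that $\operatorname{grad}\log\varphi$ may be large near $t=1$. We would control it via the $C^2$ regularity of $\rho_1$, which should transfer to $g$, so that $\varphi$ remains $C^1$ up to $t=1$. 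This would give a bounded drift, and Novikov and well-posedness follow immediately.
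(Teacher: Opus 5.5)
Your proposal is correct and follows essentially the paper's route. Finiteness comes from the product coupling together with the two-sided heat-kernel bounds. Existence and uniqueness of the path-space minimizer come from lower semicontinuity, compactness and strict convexity of relative entropy. A bounded admissible control is then recovered from the Schr\"odinger/Doob $h$-transform structure.

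The differences are minor and in your favor. You get compactness on the static coupling set via disintegration, rather than from weak compactness of path-space sublevel sets. You also make explicit three things the paper only asserts by citing L\'eonard's survey: the drift $\sigma^2\mathsf E_R^{-1}\operatorname{grad}\log\varphi$, its boundedness up to $t=1$, and the uniqueness of the control pair. Your boundedness sketch works, since $g=\rho_1/\hat\varphi(\cdot,1)$ with $\hat\varphi(\cdot,1)$ smooth and positive, so $g$ inherits the $C^2$ regularity of $\rho_1$.
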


\begin{proof}

We first show that this relative entropy minimization problem has finite value. Let $p_t^\sigma(R,S)$ denote the transition density, with respect to
$\mu$, of the uncontrolled diffusion
\eqref{eq:UncontrolledBrownian_thm}. Since $\mathsf G$ is compact and
connected, $p_t^\sigma$ is smooth and
strictly positive for every $t>0$. In particular, for $t=1$, there exist
constants $0<c_\sigma\leq C_\sigma<\infty$ such that
\begin{equation}
\label{eq:heat_kernel_bounds}
c_\sigma
\leq
p_1^\sigma(R,S)
\leq
C_\sigma,
\qquad
\forall\,R,S\in\mathsf G.
\end{equation}
The joint law of $(R_0,R_1)$ under
$\mathbb W_{\rho_0}^{\sigma}$ is therefore
\begin{equation}
\label{eq:reference_endpoint_law}
\mathbb W_{01}^{\sigma}
(\mathrm dR,\mathrm dS)
=
\rho_0(R)\,
p_1^\sigma(R,S)\,
\mathrm d\mu(R)\,\mathrm d\mu(S).
\end{equation}
Consider now the product coupling of the prescribed endpoint
distributions,
\begin{equation}
\label{eq:product_endpoint_coupling}
\pi(\mathrm dR,\mathrm dS)
=
\rho_0(R)\rho_1(S)\,
\mathrm d\mu(R)\,\mathrm d\mu(S).
\end{equation}
Since $p_1^\sigma(R,S)>0$, we have
$\pi\ll\mathbb W_{01}^{\sigma}$, with
\begin{equation}
\label{eq:endpoint_RN_derivative}
\frac{\mathrm d\pi}
     {\mathrm d\mathbb W_{01}^{\sigma}}
(R,S)
=
\frac{\rho_1(S)}
     {p_1^\sigma(R,S)}.
\end{equation}
Since $\rho_1$ is continuous and strictly positive on the compact
manifold $\mathsf G$, there exist constants
$0<m_1\leq M_1<\infty$ such that
\(
m_1\leq\rho_1(S)\leq M_1,
\;
 \text{for all}\; \,S\in\mathsf G.
\)
Combining this with \eqref{eq:heat_kernel_bounds}, the function
$\rho_1(S)/p_1^\sigma(R,S)$ is bounded above and away from zero on
$\mathsf G\times\mathsf G$. Consequently,
\(
D_{\mathrm{KL}}
\left(
\pi
\parallel
\mathbb W_{01}^{\sigma}
\right)
<\infty.
\)
Define a probability measure $\overline{\mathbb P}$ on $\mathcal X$ by
\begin{equation}
\label{eq:finite_entropy_competitor}
\frac{\mathrm d\overline{\mathbb P}}
     {\mathrm d\mathbb W_{\rho_0}^{\sigma}}
=
\frac{\rho_1(R_1)}
     {p_1^\sigma(R_0,R_1)}.
\end{equation}
Indeed, using the joint endpoint law
\eqref{eq:reference_endpoint_law},
\begin{align}
&\mathbb E_{\mathbb W_{\rho_0}^{\sigma}}
\left[
\frac{\rho_1(R_1)}
     {p_1^\sigma(R_0,R_1)}
\right] \notag \\
&=
\int_{\mathsf G\times\mathsf G}
\frac{\rho_1(S)}
     {p_1^\sigma(R,S)}
\rho_0(R)p_1^\sigma(R,S)
\,\mathrm d\mu(R)\,\mathrm d\mu(S)
\notag=
1,
\end{align}
so \eqref{eq:finite_entropy_competitor} defines a probability measure.
Moreover, the joint law of $(R_0,R_1)$ under
$\overline{\mathbb P}$ is precisely $\pi$. Therefore
\(
\overline{\mathbb P}\in\Pi_{01}.
\)
Furthermore,
\begin{align}
D_{\mathrm{KL}}
\left(
\overline{\mathbb P}
\parallel
\mathbb W_{\rho_0}^{\sigma}
\right)
&=
\mathbb E_{\overline{\mathbb P}}
\left[
\log
\frac{\rho_1(R_1)}
     {p_1^\sigma(R_0,R_1)}
\right]
\notag\\
&=
D_{\mathrm{KL}}
\left(
\pi
\parallel
\mathbb W_{01}^{\sigma}
\right)
<\infty.
\label{eq:finite_path_entropy_competitor}
\end{align}
Consequently,
\(
\inf_{\mathbb P\in\Pi_{01}}
D_{\mathrm{KL}}
\left(
\mathbb P
\parallel
\mathbb W_{\rho_0}^{\sigma}
\right)
<\infty.
\)
The relative entropy
\(
\mathbb P\mapsto
D_{\mathrm{KL}}
\left(
\mathbb P\parallel\mathbb W_{\rho_0}^{\sigma}
\right)
\)
is lower semicontinuous with respect to weak convergence of probability
measures, and its sublevel sets are weakly compact. Moreover,
$\Pi_{01}$ is weakly closed. Consequently, the
path space problem \eqref{eq:SBPKL} admits a minimizer
$\mathbb P^{\mathrm{opt}}\in\Pi_{01}$.

Furthermore, $\Pi_{01}$ is convex and
$D_{\mathrm{KL}}
\left(
\cdot
\parallel
\mathbb W_{\rho_0}^{\sigma}
\right)
$
is strictly convex on
$\mathcal M(\mathcal X)$.  Hence this minimizer is unique. Since every admissible control induces a measure in $\Pi_{01}$, \eqref{eq:SBPKL} is initially a relaxation of the original control problem. By the standard Schrödinger bridge characterization~\cite{leonard2013survey}, however, its minimizer is a Markov diffusion obtained through a Doob $h$-transform, whose drift is generated by a positive Schrödinger potential. Under the present assumptions and compactness of $\mathsf G$, this drift is bounded and defines an admissible finite-energy control. So the relaxation is exact, and the minimizer induces a minimizing pair for Problem~\ref{problem:SBP_on_G}.  
\end{proof} 

\begin{remark}
It is instructive to juxtapose the existence-uniqueness proof above with that in our prior work \cite[Theorem 1]{mah2026}. Beyond techniques, the nature of the arguments are different:~\cite[Theorem 1]{mah2026} only established the existence-uniqueness of solution \emph{a posteriori} since then, we had to proceed formally with necessary conditions for optimality, transform those conditions into new variables (Schr\"{o}dinger system), and then established existence-uniqueness for the resulting system. In contrast, the existence-uniqueness proof for the path space formulation established here is direct. 
\end{remark}

\subsection{Large-Deviation Interpretation}
\label{sec:large_deviation_interpretation}
The path space formulation also gives a large-deviation\footnote{The $D_{\mathrm{KL}}$ serves as the rate function.}
interpretation of the Schr\"odinger bridge problem. Indeed, consider
$N$ independent realizations
$R^{(1)},\ldots,R^{(N)}$
of the uncontrolled diffusion with common law
$\mathbb W_{\rho_0}^{\sigma}$, and define the empirical path measure
\(L_N
:=
\frac1N
\sum_{k=1}^N
\delta_{R^{(k)}}.
\)
By Sanov's large-deviation principle, the probability that $L_N$
is close to a path measure $\mathbb P$ behaves, at the exponential
scale
\(
\exp\!\left(
-N
D_{\mathrm{KL}}
\left(
\mathbb P
\parallel
\mathbb W_{\rho_0}^{\sigma}
\right)
\right).
\)
Therefore, among all path measures satisfying the prescribed endpoint
constraints, the minimizer
$\mathbb P^{\mathrm{opt}}$
is the least unlikely, or equivalently the most probable, deviation
from the uncontrolled dynamics. Since
$\mathbb P^{\mathrm{opt}}$
corresponds to
$\left(
\rho_\sigma^{\mathrm{opt}},
\Omega_\sigma^{\mathrm{opt}}
\right)$, the optimal Schr\"odinger bridge may thus be interpreted as the most
likely stochastic evolution connecting the two prescribed marginals.    
\section{Static Sinkhorn and Numerical Example} \label{sec:numerical_simulations}
\begin{figure*}[!t]
    \centering
\includegraphics[width=0.97\textwidth]{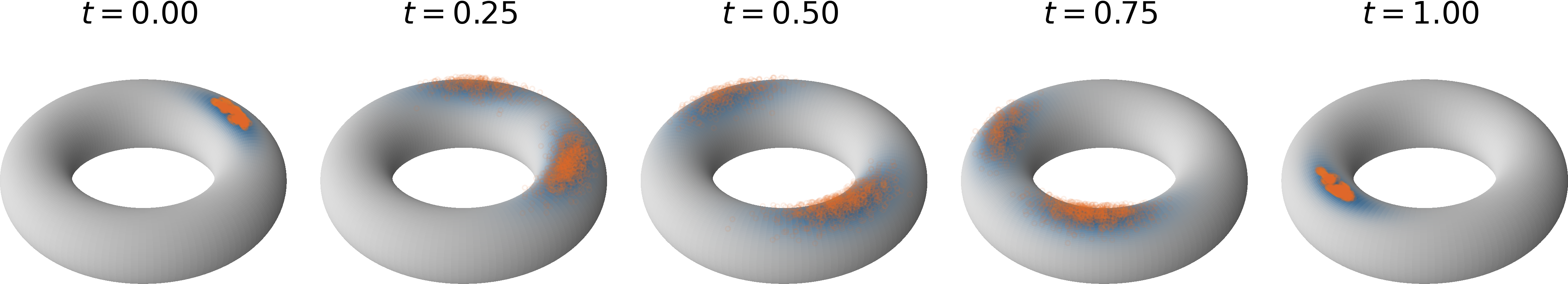}
\vspace{-6pt}
    \caption{Evolution of the Schrödinger bridge probability density between endpoint distributions  $\rho_0$ and $\rho_1$ on the torus
    $\mathbb{T}^{2}$. In {\blue{blue}} is shown the continuous density function $\rho^{\mathrm{opt}}_{\sigma}(\cdot,t)$ -- the solution of \eqref{eq:SBP_on_G} computed by solving the boundary-coupled system of heat PDEs on the Lie group as in \cite{mah2026}. In {\color{orange}{orange}} are the scattered data points transported along  $\mathbb P^{\mathrm{opt}}$ -- the minimizer of \eqref{eq:SBPKL} computed by solving the static Sinkhorn recursions.}
    \label{fig:torusSnapshots}
    \vspace*{-0.1in}
\end{figure*}
To practically demonstrate the equivalence between the solutions of \eqref{eq:SBP_on_G} and \eqref{eq:SBPKL}, we solve both formulations for the following 
antipodal unimodal reference measures $\rho_0$, $\rho_1$ supported on the torus $\mathbb{T}^{2}$:
\begin{align*}
    \rho_0(\theta_1,\theta_2) &= \frac{\exp\left(25\cos(\theta_1-\tfrac{\pi}{3}) + 30\cos(\theta_2-\tfrac{\pi}{2})\right)}{4\pi^2 I_0(25)\,I_0(30)},\\
    \rho_1(\theta_1,\theta_2) &= \frac{\exp\left(25\cos(\theta_1-\tfrac{11\pi}{8}) + 30\cos(\theta_2-\tfrac{\pi}{2})\right)}{4\pi^2 I_0(25)\,I_0(30)},
\end{align*}
where the Bessel function $I_0(\kappa) := \sum_{m=0}^{\infty} \frac{1}{(m!)^2}\left(\frac{\kappa}{2}\right)^{2m}$.

To solve \eqref{eq:SBP_on_G}, the domain $\mathbb{T}^2$ for both measures is discretized into a $128\times 128$ grid, whereover a Schr\"odinger system (boundary coupled system of forward and backward heat PDEs over the Lie group) is solved by \emph{dynamic} Sinkhorn recursions; see \cite{mah2026}.
For solving \eqref{eq:SBPKL}, we make use of strict convexity of the objective thereof, proven in Theorem \ref{thm:existence-uniqueness_SBPsolution}. The resulting strong duality allows for the solution of \eqref{eq:SBPKL} via \emph{static} Sinkhorn recursions over the Lagrange multipliers associated with the constraints of $\Pi_{01}$. For the static Sinkhorn recursions, the guarantee of linear convergence to a unique set of multipliers is well-established; see e.g., \cite[Sec. V]{bondar2025stochastic}. Following convergence, the minimizer $\mathbb{P}^{\rm{opt}}$ can be obtained from the multipliers. In our implementation, we sample $N=50$ points from the discretized $\rho_0$ and $\rho_1$ and solve \eqref{eq:SBPKL} therebetween\footnote{The choice to solve \eqref{eq:SBPKL} over scattered data is for numerical convenience of static Sinkhorn; one can use the same fixed grid as for the solution of~\eqref{eq:SBP_on_G}.}.
In Figure
\ref{fig:torusSnapshots} we see the SB between the two measures is the same when
solving both formulations. The 50 sampled data points used in the solution of
\eqref{eq:SBPKL} are shown to track exactly the continuous density function, which is the solution of \eqref{eq:SBP_on_G}. More simulation and animations are available~at:\\ {\small{\url{https://gradslab.github.io/LargeDeviationSBP/}}}

\section{Concluding Remarks}
\label{sec:Conclusion} 
We established equivalence between the stochastic optimal control and path space formulations of the SBP on a compact connected Lie group. Using horizontal lift and stochastic anti-development, we derived a Girsanov type change of measure, and related the expected control energy to path space relative entropy. This equivalence yields existence and uniqueness of the bridge, a large-deviation interpretation, and a static Sinkhorn formulation for computing the optimal path measure. Numerical results on $\mathbb{T}^2$ illustrate agreement between the stochastic optimal control and path space solutions.
 
\bibliographystyle{IEEEtran}
\bibliography{myreferences}

\end{document}